\documentclass[a4paper,UKenglish,cleveref, autoref, thm-restate]{lipics-v2021}

\pdfoutput=1 
\hideLIPIcs  


\bibliographystyle{plainurl}

\title{Edge Partitions of Complete Geometric Graphs (Part 2)} 


\author{Oswin Aichholzer}{Institute of Software Technology, Graz
University of Technology,
Austria}{oaich@ist.tugraz.at}{https://orcid.org/0000-0002-2364-0583}{Partially
supported by the Austrian Science Fund (FWF): W1230 and the European
Union H2020-MSCA-RISE project 73499 - CONNECT.}

\author{Johannes Obenaus}{Department of Computer Science, Free University of Berlin,
Germany}{johannes.obenaus@fu-berlin.de}{https://orcid.org/0000-0002-0179-125X}{Supported by ERC StG 757609.}

\author{Joachim Orthaber}{Institute of Software Technology, Graz University of Technology, Austria}{joachim.orthaber@student.tugraz.at}{https://orcid.org/0000-0002-9982-0070}{}

\author{Rosna Paul}{Institute of Software Technology, Graz University of
Technology,
Austria}{ropaul@ist.tugraz.at}{https://orcid.org/0000-0002-2458-6427}{Supported
by the Austrian Science Fund (FWF): W1230.}

\author{Patrick Schnider}{Department of Mathematical Sciences, University
of Copenhagen, Denmark \and
\url{http://people.inf.ethz.ch/schnpatr}}{ps@math.ku.dk}{https://orcid.org/0000-0002-2172-9285}{Has
received funding from the European Research Council under the European
Unions Seventh Framework Programme ERC Grant agreement ERC StG 716424 -
CASe.}

\author{Raphael {Steiner}}{Departement of Computer Science, ETH Z\"{u}rich, Switzerland \and \url{
https://sites.google.com/view/raphael-mario-steiner/startseite} 
}{raphaelmario.steiner@inf.ethz.ch}{
https://orcid.org/0000-0002-4234-6136}{Supported by an ETH Zurich 
Postdoctoral Fellowship}

\author{Tim Taubner}{Departement of Computer Science, ETH Z\"{u}rich,
Switzerland}{tim.taubner@inf.ethz.ch}{https://orcid.org/0000-0001-5786-4756}{}

\author{Birgit Vogtenhuber}{Institute of Software Technology, Graz
University of Technology, Austria
}{bvogt@ist.tugraz.at}{https://orcid.org/0000-0002-7166-4467}{Partially
supported by Austrian Science Fund within the collaborative DACH project
\emph{Arrangements and Drawings} as FWF project \mbox{I 3340-N35}.}

\authorrunning{Aichholzer, Obenaus, Orthaber, Paul, Schnider, Steiner, Taubner, Vogtenhuber} 

\Copyright{Oswin Aichholzer, Johannes Obenaus, Joachim Orthaber, Rosna Paul, Patrick Schnider, Raphael Steiner, Tim Taubner, Birgit Vogtenhuber} 

\begin{CCSXML}
        <ccs2012>
        <concept>
        <concept_id>10002950.10003624.10003625</concept_id>
        <concept_desc>Mathematics of computing~Combinatorics</concept_desc>
        <concept_significance>500</concept_significance>
        </concept>
        <concept>
        <concept_id>10002950.10003624.10003633</concept_id>
        <concept_desc>Mathematics of computing~Graph theory</concept_desc>
        <concept_significance>500</concept_significance>
        </concept>
        </ccs2012>
\end{CCSXML}

\ccsdesc[500]{Mathematics of computing~Combinatorics}
\ccsdesc[500]{Mathematics of computing~Graph theory}

\keywords{edge partition, complete geometric graph, beyond planar subgraphs} 

\category{} 

\relatedversion{part 1: \url{https://arxiv.org/abs/2108.05159}} 



\acknowledgements{We thank the organizers and participants of the $5^{th}$ DACH Workshop on Arrangements and Drawings, that took place in March 2021 (online) and was funded by Deutsche Forschungsgemeinschaft (DFG), the Austrian Science Fund (FWF) and the Swiss National Science Foundation (SNSF).}

\nolinenumbers 

\EventEditors{John Q. Open and Joan R. Access}
\EventNoEds{2}
\EventLongTitle{42nd Conference on Very Important Topics (CVIT 2016)}
\EventShortTitle{CVIT 2016}
\EventAcronym{CVIT}
\EventYear{2016}
\EventDate{December 24--27, 2016}
\EventLocation{Little Whinging, United Kingdom}
\EventLogo{}
\SeriesVolume{42}
\ArticleNo{23}


\usepackage{graphicx}
\graphicspath{{./figures/}}
\usepackage{wrapfig}

\usepackage[draft]{todonotes}   

\usepackage{mathtools}
\usepackage{csquotes}

\newcommand{\E}{\mathbb{E}}
\newcommand{\crn}{\operatorname{cr}}

\usepackage{xspace}
\newcommand{\quasiplanar}{quasi-planar\xspace}
\newcommand{\quasiplane}{quasi-plane\xspace}

\begin{document}

\maketitle

\begin{abstract}
	Recently, the second and third author showed that complete geometric graphs on $2n$ vertices in general cannot be partitioned into $n$ plane spanning trees. Building up on this work, in this paper, we initiate the study of partitioning into beyond planar subgraphs, namely into $k$-planar and $k$-\quasiplanar subgraphs and obtain first bounds on the number of subgraphs required in this setting.
\end{abstract}

\section{Introduction}\label{sec:intro}
A geometric graph $G = G(P,E)$ is a drawing of a graph in the plane where the vertex set is drawn as a point set $P$ in general position 
(that is, no three points are collinear) and each edge of $E$ is drawn as a straight-line segment between its vertices.
A geometric graph $G$ is \emph{plane} if no two of its edges \emph{cross} (that is, share a point in their relative interior). Similarly, $G$ is \emph{$k$-plane} for some $k \geq 0$, if each edge crosses at most $k$ other edges and \emph{$k$-\quasiplane} if there are no $k$ pairwise crossing edges (or in other words, a largest \emph{crossing family} has size less than $k$). Further, we call $G$ \emph{convex}, if the underlying point set is in convex position.

A \emph{partition} (also called edge partition) of a graph $G$ is a set of edge-disjoint subgraphs of $G$ whose union is $G$.
A subgraph of (a connected graph) $G$ is \emph{spanning} if it is connected and its vertex set is the same as the one of $G$.

\subparagraph*{Related Work.} Recently, the second and third author \cite{obenaus_orthaber_2021} showed that there are complete geometric graphs that cannot be partitioned into plane spanning trees, while this is always possible for graphs in convex \cite{kainen1979} or regular wheel position \cite{aichholzer2017packing,trao2019edge}. For the related \emph{packing} problem where not all edges of the underlying graphs must be covered, Biniaz and Garc{\'{\i}}a \cite{garcia_packing} showed that $\lfloor n/3 \rfloor$ plane spanning trees can be packed in any complete geometric graph on $n$ vertices, which is the currently best lower bound.

Note that the problem of partitioning a geometric graph into plane subgraphs is equivalent to a classic edge coloring problem, where each edge should be assigned a color in such a way that no two edges of the same color cross (of course using as few colors as possible). This problem received considerable attention from a variety of perspectives (see for example~\cite{DBLP:journals/jct/PawlikKKLMTW14} and references therein) and is the topic of the \href{https://cgshop.ibr.cs.tu-bs.de/competition/cg-shop-2022/\#problem-description}{CG:SHOP challenge 2022} \cite{CG_challenge}.

\subparagraph*{Our Results.} In this paper, we study the broader setting of beyond planar partitions. To the best of our knowledge, this problem has not been studied before. In the setting of $k$-planar partitions we focus on the convex setting, showing the following bounds (which is tight for $k=1$):

\begin{restatable}{proposition}{propThreePlanarcase}\label{prop:3planarcase}
For a point set $P$ in convex position with $|P| = n \ge 5$, $K(P)$ can be partitioned into $\left\lceil\frac{n}{3}\right\rceil$ 1-planar subgraphs and $\left\lceil\frac{n}{3}\right\rceil$ subgraphs are required in every 1-planar partition.
\end{restatable}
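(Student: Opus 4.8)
The plan is to prove the two halves of the statement separately: an explicit construction for the upper bound, and an obstruction argument for the lower bound. Throughout I would identify $P$ with the cyclic set $\{0,1,\dots,n-1\}$ and use that for points in convex position two straight-line edges cross exactly when their endpoints alternate around the circle, so the crossing structure of $K(P)$ depends only on $n$.

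\emph{Upper bound.} I would build a partition into $m:=\lceil n/3\rceil$ subgraphs that is (nearly) invariant under the rotation $\rho\colon i\mapsto i+3$, which has order $m$ when $3\mid n$ (with minor corrections otherwise). One selects a fundamental subgraph $G_0$ containing exactly one representative from each orbit of $\rho$ on $E(K(P))$ and sets $G_t:=\rho^t(G_0)$; then $\{G_0,\dots,G_{m-1}\}$ partitions the edge set once the orbit bookkeeping is done, the only care being the orbit of the longest edges (which is short when $n$ is even) and the residue of $n$ modulo $3$. The substantive step is that each $G_t$ is $1$-planar; by symmetry it suffices to check $G_0$, and here one arranges the representatives so that the few ``long'' edges landing in one class are cyclically far apart, after which a short case distinction on the two chord-lengths of a potential crossing pair shows that no edge of $G_0$ is crossed twice. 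An alternative, probably cleaner, route is to realize each $G_t$ as a triangulation of the convex $n$-gon (outerplane, hence $1$-planar) augmented by the ``flip'' diagonals in a set of pairwise non-adjacent quadrilaterals of the triangulation — each such diagonal crosses only one triangulation edge — and to verify that $m$ suitably rotated copies are edge-disjoint and cover $K(P)$; the averaging $\binom n2\le m(2n-3)+\Theta(n)$ confirms the counts are compatible.

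\emph{Lower bound.} The idea is that a single $1$-planar subgraph cannot absorb too many mutually obstructing edges. For $n$ odd this is transparent: let $S$ be the $n$ edges of chord-length $\tfrac{n-1}{2}$. A direct computation shows that the crossing graph on $S$ (vertices $=$ edges of $S$, adjacency $=$ crossing) is isomorphic to the complement $\overline{C_n}$ of the $n$-cycle — an edge of $S$ crosses every other edge of $S$ except the two that share an endpoint with it. If $K(P)$ is partitioned into $1$-planar subgraphs $G_1,\dots,G_k$, then the edges of $S$ inside any $G_i$ induce a subgraph of $\overline{C_n}$ of maximum degree at most $1$; but $\overline{C_n}$ has no such induced subgraph on $4$ vertices, since a $4$-set would be forced to induce a $2$-regular subgraph of $C_n$, impossible for $n\ge5$. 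Hence each $G_i$ contains at most $3$ edges of $S$, and $|S|=n$ forces $k\ge\lceil n/3\rceil$; combined with the construction this also shows both bounds are tight.

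The step I expect to be the main obstacle is the lower bound for \emph{even} $n$. There the true diameters form only a clique of size $n/2$ in the crossing graph, giving merely $\lceil n/4\rceil$, while the near-diameters of chord-length $\tfrac n2-1$ have a crossing graph whose complement is $3$-regular and contains induced $4$-cycles, so their ``dissociation number'' is $4$ and again only yields $\lceil n/4\rceil$. To recover $\lceil n/3\rceil$ for even $n$ one needs a finer obstruction — for instance combining several short length classes into the set $S$ and bounding the dissociation number of the resulting (denser) crossing graph, or reducing to $K(P')$ on $n-1$ points and separately repairing the off-by-one loss, which occurs precisely when $n\equiv4\pmod 6$.
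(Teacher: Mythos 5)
Your lower bound for odd $n$ is correct and quite elegant: the crossing graph of the $n$ longest chords is indeed $\overline{C_n}$, the chords of $S$ landing in a single $1$-planar class must induce a subgraph of $\overline{C_n}$ of maximum degree at most $1$, and no $4$-set can do so because it would force a $2$-regular induced subgraph of $C_n$, i.e.\ a $C_4$ inside $C_n$ with $n\ge 5$. But, as you concede yourself, this collapses for even $n$: each single length class only yields $\lceil n/4\rceil$, deleting one point recovers $\lceil (n-1)/3\rceil$, which falls short of $\lceil n/3\rceil$ by exactly one precisely when $n\equiv 4\pmod 6$ (e.g.\ $n=10$), and the ``finer obstruction'' you would need there is only gestured at. That is a genuine, acknowledged gap covering a third of the even cases. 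The paper's lower bound avoids length classes entirely and is parity-free: it adjoins the $n$ uncrossed boundary edges to an arbitrary color class and applies the bound $e\le\frac{5}{2}n-4$ for convex $1$-plane graphs (derived from a lemma of Pach and T\'oth), so each class contains at most $\frac{3}{2}n-4$ of the $\frac{n(n-3)}{2}$ interior edges, whence $c>\frac{n}{3}-\frac{1}{3}$ and $c\ge\lceil n/3\rceil$. Without such an edge bound, or a substitute for your dissociation argument that works for all residues of $n$, the lower bound is incomplete.

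The upper bound is also not yet a proof. Your rotation-by-$3$ scheme, correctly instantiated, is essentially the paper's construction --- for a regular $n$-gon, group the $n$ slope classes into $\lceil n/3\rceil$ circular intervals of at most three consecutive slopes and make each interval a color class; these classes are permuted by the rotation $\rho$ and each is a transversal of the $\rho$-orbits. However, the one substantive step, that each class is $1$-planar, is deferred to ``a short case distinction'' that you never carry out (and which is more naturally phrased in terms of slope differences than chord lengths), while your alternative triangulation route only verifies that the edge counts are consistent, which does not produce a partition. The missing verification is short but is the entire content of the upper bound: two edges of equal slope or of slope difference one never cross, and an edge is crossed by at most one edge of slope difference two, so within an interval of three slopes every edge participates in at most one crossing.
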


\begin{restatable}{theorem}{thmGeneral}\label{thm:general}
For an $n$-point set $P$ in convex position and every $k \in \mathbb{N}$, $K(P)$ admits a partition into at most $\frac{n}{\sqrt{2k}}$ $k$-planar subgraphs. More precisely, for every $s > 2$, $K(P)$ admits a $\frac{(s-1)(s-2)}{2}$-planar partition into $\lceil\frac{n}{s}\rceil$ subgraphs.

Conversely, for every $k \in \mathbb{N}$, at least $\frac{n-1}{4.93\sqrt{k}}$ subgraphs are required in any $k$-planar partition of $K(P)$.
\end{restatable}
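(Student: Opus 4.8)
The plan is to write the partition down explicitly from the cyclic structure of convex position. Fix a cyclic labelling $0,1,\dots,n-1$ of $P$ and attach to each edge $\{i,j\}$ the value $\mu(\{i,j\})=(i+j)\bmod n$; the fibres $\mu^{-1}(c)$ are the classical families of pairwise non-crossing (``parallel'') chords, so each fibre is a plane set of edges. Assuming first that $s\mid n$, say $n=ms$, I would merge the fibres into the $m$ colour classes $C_r=\bigcup_{c=rs}^{rs+s-1}\mu^{-1}(c)$, $r=0,\dots,m-1$; since $\{0,\dots,n-1\}$ is the disjoint union of the length-$s$ windows $\{rs,\dots,rs+s-1\}$, the $C_r$ are edge-disjoint and cover $K(P)$. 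The core claim is that each $C_r$ is $\tfrac{(s-1)(s-2)}{2}=\binom{s-1}{2}$-planar. By relabelling it suffices to bound crossings in $C_0$, i.e.\ among edges whose endpoint sum lies cyclically in $\{0,\dots,s-1\}$. For a fixed $e=\{a,b\}\in C_0$, an edge $e'=\{a',b'\}\in C_0$ crosses $e$ iff its endpoints alternate with $a,b$ around the circle; I would then turn the two membership conditions $(a+b)\bmod n,(a'+b')\bmod n\in\{0,\dots,s-1\}$ into an upper bound on how deep into their respective arcs $a'$ and $b'$ may lie, producing an injection from the crossing edges of $e$ into the triangular index set $\{(x,y):x,y\ge 1,\ x+y\le s-1\}$, of size $\binom{s-1}{2}$. (A short split into cases — $e$ a ``short'' chord inside the window versus a long ``diameter-like'' chord — is needed; in fact a direct count shows the bound is attained, so it is tight, and for $s=3$ this recovers Proposition~\ref{prop:3planarcase}.) For $n$ not divisible by $s$ I would pad the cyclic label set with at most $s-1$ dummy labels up to a multiple of $s$, run the construction, and discard the dummies: crossings depend only on the cyclic order of the genuine points, so each class stays $\binom{s-1}{2}$-planar while the number of classes becomes $\lceil n/s\rceil$. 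Finally, to get the form $\tfrac{n}{\sqrt{2k}}$, pick $s$ maximal with $\binom{s-1}{2}\le k$; then $k<\binom{s}{2}<\tfrac{s^2}{2}$ forces $s>\sqrt{2k}$, and a routine estimate bounds $\lceil n/s\rceil$ by $\tfrac{n}{\sqrt{2k}}$ (up to the unavoidable rounding).

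\textbf{Lower bound.}
Here I would argue by an edge-density bound. Each subgraph in a $k$-planar partition of $K(P)$ is itself a $k$-planar convex geometric graph on at most $n$ points, so it suffices to show that such a graph on $v$ vertices has at most $C\sqrt{k}\,v$ edges with $C$ small enough that $2C\le 4.93$: then $\binom{n}{2}=\sum_i|E(G_i)|\le N\cdot C\sqrt{k}\,n$ yields $N\ge\tfrac{n-1}{2C\sqrt{k}}$. For the edge bound I would combine $\crn(G)\le\tfrac12 k\,|E(G)|$ (from $k$-planarity) with a convex crossing lemma $\crn(G)\ge\tfrac{m^3}{\gamma n^2}$ valid for $m=|E(G)|$ above a small multiple of $n$; the latter I would derive from the outerplanar bound $|E|\le 2v-3$ by the standard probabilistic deletion argument (one deletion step already gives $\gamma=27$, and the usual amplification lowers $\gamma$, the better ``$2v$'' base being what makes the convex constant beat the general crossing lemma). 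Combining the two inequalities gives $m\le\sqrt{\gamma k/2}\,n$ (the case $m$ below the threshold being trivial for $k\ge 1$), hence $N\ge\tfrac{n-1}{\sqrt{2\gamma k}}$; pushing $\gamma$ down sufficiently lands at $N\ge\tfrac{n-1}{4.93\sqrt{k}}$.

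\textbf{Where the work is.}
For the upper bound the delicate point is the crossing count for $C_0$: one must run the case analysis so that the triangular bound $\binom{s-1}{2}$ comes out uniformly over \emph{all} edges of the class, including the short chords inside a window, which I expect to be the fussy ones. For the lower bound the crux is the value of the crossing-lemma constant $\gamma$ — squeezing the probabilistic/amplification argument (or invoking a sharp bound for the convex/outer crossing number) down to $\sqrt{2\gamma}\le 4.93$ — together with a careful treatment of the small-$k$ and small-$m$ regimes where the crossing lemma does not directly apply.
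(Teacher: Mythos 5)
Your approach coincides with the paper's on both halves. For the upper bound, your classes $C_r$ built from the fibres of $\mu(\{i,j\})=(i+j)\bmod n$ are exactly the paper's slope classes of a regular $n$-gon (for a regular $n$-gon the slope of chord $ij$ is determined by $(i+j)\bmod n$), and your triangular injection is the paper's count that an edge can meet at most $d-1$ same-colour edges of cyclic slope difference $d$, summing to $\frac{(j-1)(j-2)}{2}+\frac{(s-j)(s-j-1)}{2}\le\binom{s-1}{2}$. This part is sound; the paper does not even need your padding trick, since it simply allows the last circular interval of slopes to be shorter. The lower bound is likewise the paper's double count: a convex crossing lemma combined with $\crn(G)\le\frac{k}{2}e(G)$ gives a linear edge bound $e\le\sqrt{\gamma k/2}\,n$ for convex $k$-plane graphs (Theorem~\ref{thm:generalupperbound}), whence the number of colours is at least $\frac{n-1}{\sqrt{2\gamma k}}$.

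The one genuine gap is the value of $\gamma$, which you yourself flag as the crux but do not close. With only the ingredients you actually supply --- the outerplanar bound $e\le 2n-3$ and a single-step deletion argument --- the optimisation gives $\gamma=27$, hence $e\le\sqrt{13.5k}\,n$ and a lower bound of only $\frac{n-1}{7.35\sqrt{k}}$, which does not prove the stated $\frac{n-1}{4.93\sqrt{k}}$ (one needs $\gamma\le 12.15$). The ``usual amplification'' is not a free step here: it requires linear edge bounds for \emph{convex $k$-plane} graphs for each $k\in\{1,2,3,4\}$, namely $e\le\frac{k+4}{2}n-(k+3)$ (the paper's Proposition~\ref{prop:smallvalues}, deduced from a face/half-edge counting lemma of Pach and T\'{o}th, Lemma~\ref{lemma:technical}). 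These feed a staged deletion argument yielding $\crn(G)\ge 5e-15n+25$, and only then does the probabilistic argument with $p=\frac{9n}{2e}$ produce $\crn(G)\ge\frac{20}{243}\frac{e^3}{n^2}$, i.e.\ $\gamma=\frac{243}{20}=12.15$ and $\sqrt{2\gamma}\approx 4.93$. So the missing piece is precisely the convex analogue of the Pach--T\'{o}th bounds for $1\le k\le 4$; without it your argument proves the theorem only with a worse constant.
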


On the other hand, we consider partitions into $k$-\quasiplanar subgraphs for arbitrary point sets (in general position). We show that a partition into $3$-\quasiplanar spanning trees is possible for any $P$ with $|P|$ even. This is best possible, as $2$-\quasiplanar graphs are plane. We further present bounds on the partition of any $K(P)$ into $k$-\quasiplanar subgraphs for general $k$.

\begin{restatable}{lemma}{lemThreeQuasiplanarST}\label{lem:3_quasiplanar_st}
  Let $P$ be a point set of size $2n$, then the complete geometric graph $K(P)$ can be partitioned into $n$ 3-\quasiplanar spanning trees.
\end{restatable}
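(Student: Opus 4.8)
The plan is to reduce the statement to a purely combinatorial decomposition of the abstract graph $K_{2n}$ by exploiting the fact that \emph{every geometric double star is automatically $3$-\quasiplane}. Recall that a double star is a tree with exactly two non-leaf vertices, its centers $a$ and $b$, which are adjacent, while every other vertex is a leaf attached to $a$ or to $b$. First I would establish this observation: if $D$ is a geometric double star on a point set in general position, then $D$ has no three pairwise crossing edges. Indeed, two crossing edges cannot share an endpoint (two segments with a common endpoint meet only there, by general position), but every edge of $D$ is incident to $a$ or to $b$, and the central edge $ab$ is incident to both. Hence among any three edges of $D$ either one of them is $ab$, in which case it shares an endpoint with — and therefore crosses neither of — the other two, or, by pigeonhole, two of the three are incident to the same center and so do not cross. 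In both cases the three edges are not pairwise crossing, so $D$ is $3$-\quasiplane.

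Given this, it suffices to partition the abstract complete graph $K_{2n}$ into $n$ spanning double stars: drawing such a partition on the points of $P$ (in any order) yields $n$ spanning trees of $K(P)$, each $3$-\quasiplane. I would build this partition rotationally. Identify the vertex set of $K_{2n}$ with $\mathbb{Z}_{2n}$ and let $T_0$ be the double star with centers $0$ and $n$, with $\{1,\dots,n-1\}$ the leaves of $0$ and $\{n+1,\dots,2n-1\}$ the leaves of $n$. For $i=0,1,\dots,n-1$ set $T_i := T_0+i$, i.e.\ add $i$ to every vertex modulo $2n$; each $T_i$ is again a spanning double star, now with centers $i$ and $i+n$. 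It remains to check that $T_0,\dots,T_{n-1}$ partition $E(K_{2n})$, which is a short counting argument organized by the difference classes of the circulant structure. For a difference $d\in\{1,\dots,n-1\}$ the two edges of $T_0$ of difference $d$ are $\{0,d\}$ and $\{n,n+d\}$, which are antipodal translates of one another (they differ by $n$); consequently their images under the shifts $0,1,\dots,n-1$ sweep out all $2n$ edges of difference $d$, each exactly once. The unique edge of $T_0$ of difference $n$, namely $\{0,n\}$, similarly sweeps out all $n$ edges of difference $n$, each exactly once. Since $T_0$ has $2(n-1)+1 = 2n-1$ edges, the $n$ trees together carry $n(2n-1)=\binom{2n}{2}$ edges, so this sweep is in fact a partition.

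The one step that needs genuine care is this last bookkeeping — verifying that the $n$ chosen translates of $T_0$ meet every edge exactly once, including the ``diagonal'' class of difference $n$; everything else (that each $T_i$ is a spanning double star, and that geometric double stars are $3$-\quasiplane) is immediate. Combining the two parts proves the lemma. I would also note, as an aside, that $n$ is the minimum possible number of pieces, since each spanning tree has only $2n-1$ edges while $K(P)$ has $n(2n-1)$.
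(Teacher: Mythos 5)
Your proof is correct. It follows the same overall strategy as the paper --- partition $K_{2n}$ into $n$ spanning double stars and observe that any geometric double star is automatically 3-\quasiplane --- but the explicit decomposition differs. The paper orders the points by $x$-coordinate, takes the consecutive pairs $\{p_{2i-1},p_{2i}\}$ as the centers of $T_i$, and assigns leaves by a parity rule (evenly indexed points to the left join $p_{2i-1}$, oddly indexed points to the left join $p_{2i}$, and so on), verifying coverage by a case analysis on the parities of an edge's two endpoints. You instead use the classical circulant decomposition on $\mathbb{Z}_{2n}$ with antipodal centers $\{i,\,i+n\}$ and verify coverage via difference classes; your bookkeeping for the difference-$d$ and difference-$n$ classes checks out, and the edge count $n(2n-1)=\binom{2n}{2}$ confirms the partition. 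Your route has two small advantages: it makes fully explicit what the paper leaves implicit, namely that the geometry plays no role at all (the paper's $x$-ordering is pure labeling, never used geometrically), and it supplies the short pigeonhole argument for why a double star has no three pairwise crossing edges, which the paper merely asserts with ``any double star is necessarily 3-\quasiplanar.'' Both decompositions require comparable effort; yours isolates the combinatorial core as a clean statement about abstract $K_{2n}$.
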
 

\begin{restatable}{theorem}{thmMainQuasiplanarCF}\label{thm:main_quasiplanar_cf}
Let $P$ be a set of $n$ points in general position and denote the size of a largest crossing family  on $P$ by $m$. Also let $k \geq 3$ s.t. $k \leq m$ (otherwise one color is always sufficient). Then, at least $\lceil \frac{m}{k-1} \rceil$ colors are required and at most $\lceil \frac{m}{k-1} \rceil + \lceil \frac{n-2m}{k-1} \rceil$ colors are needed to partition the complete geometric graph $K(P)$ into $k$-\quasiplanar subgraphs.
\end{restatable}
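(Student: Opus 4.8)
I would prove the two bounds separately. The lower bound is a one-line pigeonhole argument: fix a crossing family $F$ of size $m$ on $P$, i.e.\ $m$ pairwise crossing edges; since no $k$-\quasiplanar subgraph contains $k$ pairwise crossing edges, every color class contains at most $k-1$ edges of $F$, so covering $F$ forces at least $\lceil\frac{m}{k-1}\rceil$ colors.

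\textbf{Upper bound, the routine part.} Let $M=\{e_1,\dots,e_m\}$ be a maximum crossing family. Two crossing segments are disjoint, so $M$ is a matching and its endpoint set $V_M$ has size $2m$; put $W=P\setminus V_M$, so $|W|=n-2m$. I split $E(K(P))$ into the edges incident to $W$ and the edges of $K(V_M)$. The first part is handled by peeling $W$: order $W=\{w_1,\dots,w_{n-2m}\}$ arbitrarily and let $T_j$ be the (plane) star from $w_j$ to $V_M\cup\{w_1,\dots,w_{j-1}\}$. The stars $T_1,\dots,T_{n-2m}$ partition exactly the edges incident to $W$, and the union of each block of $k-1$ consecutive stars is a union of at most $k-1$ plane graphs, hence $k$-\quasiplanar (a crossing family meets each plane graph in at most one edge). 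This uses $\lceil\frac{n-2m}{k-1}\rceil$ colors.

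\textbf{Upper bound, the hard part.} It remains to partition $K(V_M)$ into $\lceil\frac{m}{k-1}\rceil$ $k$-\quasiplanar subgraphs, which is optimal since $M\subseteq K(V_M)$. Here I would use the structure of $M$: because its segments pairwise cross, their projections onto any fixed line pairwise overlap, so by the one-dimensional Helly theorem these intervals share a point, giving $M$ a common transversal $L$; taking $L$ vertical splits $V_M$ into the left endpoints $A=\{a_1,\dots,a_m\}$ and right endpoints $B=\{b_1,\dots,b_m\}$ with $e_i=a_ib_i$. Ordering $e_1,\dots,e_m$ by the height of $e_i\cap L$, I would build the $r$-th color class around the block $e_{(r-1)(k-1)+1},\dots,e_{r(k-1)}$: it takes these $k-1$ crossing edges, the part of the star at each corresponding $a_i$ that lies left of $L$ and the part of the star at each corresponding $b_i$ that lies right of $L$ (each such piece is plane, as the two stars are separated by $L$ and $e_i$ crosses $L$ once), plus an appropriate share of the remaining long edges $a_ib_j$.

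\textbf{Main obstacle.} The crux is bounding the crossing number of each color class by $k-1$. A naive double-star construction only guarantees $2(k-1)$, which would yield the weaker bound $\lceil\frac{2m}{k-1}\rceil$ for $K(V_M)$; to reach $\lceil\frac{m}{k-1}\rceil$ one must exploit that $M$ is a \emph{maximum} crossing family — no crossing family of $K(P)$ has more than $m$ edges — together with the height ordering, to show that the long edges $a_ib_j$ can be distributed over the $\lceil\frac{m}{k-1}\rceil$ classes without ever producing $k$ pairwise crossing edges inside one class. Summing the two parts gives the claimed upper bound $\lceil\frac{m}{k-1}\rceil+\lceil\frac{n-2m}{k-1}\rceil$.
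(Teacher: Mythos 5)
Your lower bound and your treatment of the points outside the crossing family are correct and essentially the same as the paper's: the pigeonhole on the $m$ pairwise crossing edges gives $\lceil\frac{m}{k-1}\rceil$ as a lower bound, and grouping the $n-2m$ points of $P\setminus V_M$ into blocks of size $k-1$ and giving all edges incident to a block one colour (a union of at most $k-1$ stars, each plane, hence a $k$-\quasiplanar class) accounts for the $\lceil\frac{n-2m}{k-1}\rceil$ term exactly as in the paper.

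The remaining step --- partitioning $K(V_M)$ into $\lceil\frac{m}{k-1}\rceil$ $k$-\quasiplanar subgraphs --- is the entire non-trivial content of the theorem, and your proposal does not prove it: after forming the blocks of $k-1$ consecutive crossing edges and the $L$-split stars, the ``long'' edges $a_ib_j$ receive only ``an appropriate share'', and you yourself flag the verification of $k$-quasi-planarity as an unresolved crux. That is a genuine gap. The paper closes it with a separate statement (Lemma~\ref{lem:halvingquasi}): for the $r$-th block it takes the \emph{complete} graph $K(X_r)$ on the $2(k-1)$ endpoints of that block's edges, together with the two complete bipartite graphs joining $X_r$ to all remaining points on the respective sides of a slight rotation $\ell'$ of the block's first halving line. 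The two bipartite parts are separated by $\ell'$, so a crossing family inside one colour class uses edges of at most one of them; every edge of such a family is then forced to be incident to one of the at most $k-1$ vertices of $X_r$ lying on that side of $\ell'$, and since pairwise crossing edges are pairwise disjoint the family has at most $k-1$ members. This is precisely the device that beats the $2(k-1)$ bound of the naive double-star construction you mention. Note also that your diagnosis of what is missing points in the wrong direction: the paper never uses maximality of $M$ in $K(P)$ for this step, only that $M$ is a \emph{perfect} crossing family on its own endpoint set $V_M$ (so its members are halving edges of $V_M$), which holds automatically once you restrict to the $2m$ endpoints.
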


\Cref{sec:k_planar_partitions} is dedicated to the $k$-planar setting, where we prove \Cref{prop:3planarcase} and \Cref{thm:general}, while \Cref{sec:k-quasi-planar} is concerned with $k$-\quasiplanar subgraphs, proving \Cref{lem:3_quasiplanar_st} and \Cref{thm:main_quasiplanar_cf}.

\section{Partitions into $\mathbf{k}$-planar subgraphs}\label{sec:k_planar_partitions}

For the proofs of Proposition~\ref{prop:3planarcase} and Theorem~\ref{thm:general} (in particular for the lower bounds) it will be necessary to understand how many edges a single color class\footnote{It is more convenient to associate edges with colors instead of speaking about subgraphs of a partition.}, or in other words, how many edges a $k$-planar subgraph of a convex geometric $K_n$, can maximally have. Once such bounds are established, we will be able to lower-bound the number of colors required in any $k$-planar partition of a convex geometric $K_n$ by considering the ``largest'' color class. 

The following results are the main ingredients towards the proofs of \Cref{prop:3planarcase} and \Cref{thm:general}.

\begin{proposition}\label{prop:smallvalues}
For every $k \in \{0,1,2,3,4\}$, every convex $k$-plane graph $G$ on $n \ge 2$ vertices has at most $\frac{k+4}{2}n-(k+3)$ edges.  
\end{proposition}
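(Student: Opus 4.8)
The plan is to work with the circular sequence of vertices $v_1, \dots, v_n$ in convex position and classify the edges of $G$ by their \emph{length}, where an edge $v_iv_j$ has length $\ell \in \{1, \dots, \lfloor n/2 \rfloor\}$ if $j - i \equiv \pm \ell \pmod n$. Length-$1$ edges are the hull edges; there are at most $n$ of them, and crucially they cross no other edge, so they contribute ``for free''. The key geometric fact in the convex setting is that two chords $v_iv_j$ and $v_kv_l$ cross if and only if exactly one of $v_k, v_l$ lies on the arc strictly between $v_i$ and $v_j$. The strategy is then to show that once we delete the hull edges, the remaining graph $G'$ has few edges; adding back the at most $n$ hull edges and comparing with $\frac{k+4}{2}n - (k+3)$ should give the bound. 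Concretely, I would aim to prove that a convex $k$-plane graph with no hull edges has at most $\frac{k+2}{2}n - (k+3)$ edges for $k \le 4$, which is equivalent.

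The main tool I would use is a discharging / counting argument on edge lengths. An edge of length $\ell$ is crossed by \emph{every} edge having exactly one endpoint strictly inside its shorter arc; if there are $\ell - 1$ vertices in that arc, a length-$\ell$ edge that is ``deep'' is forced to cross many edges unless those inside vertices are sparsely connected to the outside. For small $k$ one can enumerate the possibilities. For instance, for $k = 0$ (plane), a convex plane graph is outerplanar and has at most $2n - 3$ edges, matching $\frac{0+4}{2}n - 3$. For $k = 1$, a convex $1$-plane graph on $n$ vertices has at most $\frac{5}{2}n - 4$ edges — this is a known refinement of the general $1$-planar bound $4n-8$ that holds in the convex (outer-$1$-planar) case — which matches $\frac{1+4}{2}n - 4$. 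The pattern suggests proving the statement uniformly by induction on $n$: find a vertex $v$ of bounded degree (degree at most $k+4$, say), delete it, and apply the inductive hypothesis on the $n-1$ remaining vertices, checking that the arithmetic $\frac{k+4}{2}(n-1) - (k+3) + (k+4) = \frac{k+4}{2}n - (k+3) + \frac{k+4}{2}$ leaves the right slack. Since this only gains $\frac{k+4}{2}$ rather than exactly accounting, the induction needs a sharper local statement, so I would instead look for a vertex whose removal, together with the removal of the crossing structure it participates in, drops the edge count by at least $\frac{k+4}{2}$ on average — a standard amortized/discharging setup.

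The cleanest route, and the one I would actually pursue, is a direct global count. Order edges by length and observe: the number of edges of length exactly $\ell$ is at most $n$; moreover, if $G$ is $k$-plane, then for $\ell \geq 2$ the edges of length $\geq \ell$ form a graph in which every edge is crossed at most $k$ times, and two such short-arc-disjoint structures interact in a controlled way. One shows that the total number of edges of length $\geq 2$ is at most $\left(\frac{k+2}{2}\right)n - (k+3)$ by a crossing-number-type inequality: if $G'$ (the length-$\geq 2$ part) has $e'$ edges, then $G'$ is a convex graph with at least $\binom{e' - (\text{something linear in } n)}{?}$ forced crossings, but being $k$-plane bounds the total crossings by $\frac{k}{2}e'$, yielding a linear bound. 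This is where I expect the real work to be: pinning down the exact constant so that the bound is tight for each $k \in \{0,1,2,3,4\}$ rather than merely asymptotically correct. The restriction $k \le 4$ is presumably exactly the range where this elementary counting is tight, and verifying tightness (exhibiting extremal convex $k$-plane graphs with $\frac{k+4}{2}n - (k+3)$ edges for each such $k$, e.g. the hull plus a suitable set of ``short'' chords) will be needed to confirm the constant cannot be improved — though strictly only the upper bound is claimed here. The main obstacle, then, is the careful case analysis for $k = 3$ and $k = 4$, where several crossing patterns on the short chords must be ruled out by hand.
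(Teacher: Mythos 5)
Your proposal is a plan rather than a proof, and the central step is missing. You correctly identify the setting (hull edges are crossing-free and essentially free; the work is in bounding the non-hull edges), and your quoted bounds for $k=0$ (outerplanar, $2n-3$) and $k=1$ (outer-$1$-planar, $\tfrac52 n-4$) are right. But for $k=2,3,4$ you never actually derive the constant $\tfrac{k+4}{2}n-(k+3)$: the induction you sketch is, as you yourself note, off by $\tfrac{k+4}{2}$ per step and does not close; and the ``direct global count'' contains a literal placeholder ($\binom{e'-(\text{something})}{?}$ forced crossings) in place of the inequality that would have to do all the work. Worse, the crossing-number-type inequality you want to invoke there is not available to you independently: in the paper that inequality (the convex crossing lemma) is \emph{derived from} Proposition~\ref{prop:smallvalues} via the standard edge-deletion and probabilistic arguments, so using it to prove the proposition would be circular. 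Deferring ``the careful case analysis for $k=3$ and $k=4$'' defers exactly the content of the statement.

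The ingredient you are missing is the Pach--T\'oth half-edge lemma (Lemma~2.2 of~\cite{pach1997crossings}), which is what the paper uses and is also the reason for the restriction $k\le 4$. The actual argument: take a maximum planar subgraph $G'$ of $G$, which w.l.o.g.\ contains all $n$ hull edges and is hence outerplanar; every non-hull edge of $G\setminus G'$ contributes two half-edges, each lying inside some interior face $\Phi$ of $G'$; the lemma bounds the number of half-edges in $\Phi$ by $t(\Phi)k+|\Phi|-3$; summing over faces uses $\sum_\Phi t(\Phi)=n-2$ (the face triangulations assemble into a triangulation of the convex $n$-gon) together with Euler's formula and $f(G')\le n-1$ to produce exactly $\tfrac{k+4}{2}n-(k+3)$. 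Without this (or an equivalent local counting lemma that you would have to prove from scratch, including its own case analysis for each $k\le 4$), your outline does not establish the proposition.
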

 For larger values of $k$, we have the following general bound. 
 
\begin{theorem}\label{thm:generalupperbound}
For every $k \ge 5$, every convex $k$-plane graph $G$ on $n$ vertices has at most $\sqrt{\frac{243}{40}k} \cdot n$ edges.
\end{theorem}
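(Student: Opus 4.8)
The plan is to run the convex analogue of the refined Crossing Lemma (in the style of Pach--T\'oth and Ackerman), using \Cref{prop:smallvalues} as the base input. Fix a convex geometric graph $G$ on $n$ vertices with $m$ edges, and let $\crn(G)$ be the number of crossing (equivalently, interleaving) pairs of edges in its convex drawing. Two features of the convex setting will be used repeatedly: deleting a vertex keeps the point set in convex position and destroys exactly the crossings that lose an endpoint, and deleting an edge destroys exactly the crossings lying on it.

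\emph{Step 1 (a linear lower bound on the number of crossings).} I would first show that $\crn(G)\ge 5m-15n+25$ whenever $n\ge2$. Process $G$ in five phases: in phase $i$, for $i=5,4,3,2,1$ in turn, repeatedly delete an edge that is currently crossed at least $i$ times, until no such edge remains; then the drawing is $(i-1)$-plane, and every deletion made in phase $i$ destroys at least $i$ crossings, each of them counted for the first time. Let $m_j$ denote the number of edges once the drawing has become $j$-plane, so $m\ge m_4\ge m_3\ge m_2\ge m_1\ge m_0\ge0$; by \Cref{prop:smallvalues}, $m_j\le\frac{j+4}{2}n-(j+3)$ for every $j\in\{0,1,2,3,4\}$. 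Adding up the destroyed crossings phase by phase, the telescoping sum $5(m-m_4)+4(m_4-m_3)+3(m_3-m_2)+2(m_2-m_1)+(m_1-m_0)$ is a lower bound for $\crn(G)$, hence
\[
\crn(G)\ \ge\ 5m-m_4-m_3-m_2-m_1-m_0\ \ge\ 5m-(15n-25),
\]
where we used $\sum_{j=0}^{4}\bigl(\tfrac{j+4}{2}n-(j+3)\bigr)=15n-25$. Since each bracketed difference is nonnegative, the bound is valid even if some phases delete nothing.

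\emph{Step 2 (probabilistic amplification and conclusion).} For a parameter $p\in(0,1]$, keep each vertex of $G$ independently with probability $p$ and let $G'$ be the induced convex subgraph. Then $\E[v(G')]=pn$, $\E[e(G')]=p^2m$, and $\E[\crn(G')]=p^4\crn(G)$, since each crossing has four distinct endpoints and survives precisely when all four are kept. Every convex geometric graph $H$ satisfies $\crn(H)\ge 5e(H)-15v(H)$ (by Step~1 when $v(H)\ge2$, and trivially otherwise), so applying this to $G'$ and taking expectations gives $p^4\crn(G)\ge 5p^2m-15pn$, that is $\crn(G)\ge\frac{5m}{p^2}-\frac{15n}{p^3}$. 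On the other hand $G$ is $k$-plane, so $\crn(G)\le\frac{km}{2}$. Combining these and multiplying through by $p^3$ yields $15n\ge m\bigl(5p-\tfrac{k}{2}p^3\bigr)$ for every $p\in(0,1]$. Taking $p=\sqrt{10/(3k)}$, which lies in $(0,1]$ for all $k\ge4$ and maximises the right-hand factor, gives $5p-\tfrac{k}{2}p^3=\tfrac{10}{3}\sqrt{10/(3k)}$, and therefore $m\le\tfrac{9}{2}\sqrt{3k/10}\cdot n=\sqrt{\tfrac{243}{40}k}\cdot n$. (For $n\le1$ the statement is trivial.)

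The only routine computations are the evaluation $\sum_{j=0}^{4}(\tfrac{j+4}{2}n-(j+3))=15n-25$ and the one-variable optimisation of $5p-\tfrac{k}{2}p^3$, and I do not expect a genuine obstacle. The one spot that needs care is the bookkeeping in Step~1 --- making sure that destroyed crossings are not double counted across phases and that the bound survives phases in which no edge is deleted --- which is precisely why I would organise that step around the telescoping identity, with \Cref{prop:smallvalues} supplying all five base estimates $e_{\le0},\dots,e_{\le4}$ whose sum is exactly what produces the constant $\sqrt{243/40}$.
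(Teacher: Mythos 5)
Your proposal is correct and follows essentially the same route as the paper: the edge-deletion process driven by \Cref{prop:smallvalues} to get $\crn(G)\ge 5e-15n+25$, probabilistic vertex sampling, and a comparison with $\crn(G)\le ke/2$. The only cosmetic difference is that you optimise the sampling probability $p$ as a function of $k$ after combining the two crossing bounds, whereas the paper first optimises $p=\frac{9n}{2e}$ to obtain a standalone ``convex crossing lemma'' $\crn(G)\ge\frac{20}{243}\frac{e^3}{n^2}$ and then combines; both yield the identical constant $\sqrt{243/40}$.
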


On the way to the proof of Theorem~\ref{thm:generalupperbound}, we also establish the following improvement of the well-known crossing lemma for geometric graphs whose vertices are in convex position:

\begin{lemma}[convex crossing lemma]\label{lemma:convexcrossing}
Let $G$ be a graph with $n$ vertices and $e$ edges such that $e \ge \frac{9}{2}n$. Then every straight-line embedding of $G$ into the plane in which the vertices of $G$ are placed in convex position has at least 
\[
\frac{20}{243}\frac{e^3}{n^2} \approx 0.0823 \frac{e^3}{n^2}
\] 
crossings. 
\end{lemma}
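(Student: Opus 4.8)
The plan is to imitate the classical probabilistic proof of the crossing lemma, but to replace the input bound ``a planar graph has at most $3n-6$ edges'' by the sharper fact that a \emph{convex} plane graph on $n$ vertices has at most $2n-3$ edges (this is the $k=0$ case of Proposition~\ref{prop:smallvalues}). First I would record the easy deterministic consequence: for any graph $H$ on $n'$ vertices and $e'$ edges drawn with its vertices in convex position, the number of crossings is at least $e' - (2n'-3) \geq e'-2n'$, since deleting one edge from each crossing pair yields a convex plane graph. The point is that the linear term is $2n'$ here rather than $3n'$, which is exactly what improves the constant in the final inequality.

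Next I would run the standard random sampling argument. Fix a convex straight-line drawing of $G$ with $n$ vertices, $e$ edges and $\crn$ crossings, and let each vertex survive independently with probability $p$, keeping the induced subdrawing (still in convex position). Taking expectations in the deterministic bound $\crn(H) \geq e(H) - 2n(H)$ applied to the random induced subgraph gives
\[
p^4 \crn \;\geq\; p^2 e \;-\; 2 p n,
\]
hence $\crn \geq e/p^2 - 2n/p^3$. Now I would optimize over $p \in (0,1]$: the right-hand side is maximized at $p = 3n/e$, which lies in $(0,1]$ precisely because $e \geq \frac{9}{2}n \geq 3n$. Substituting $p = 3n/e$ yields
\[
\crn \;\geq\; \frac{e^3}{9n^2} - \frac{2e^3}{27 n^2} \;=\; \frac{e^3}{27 n^2}.
\]
This already gives a convex crossing lemma, but with the worse constant $\frac{1}{27}\approx 0.037$ instead of $\frac{20}{243}\approx 0.082$.

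To reach the claimed constant $\frac{20}{243}$ I would refine the deterministic input bound: instead of using only $k=0$, bootstrap with small-$k$ information. Concretely, one can split the edge set of the random subdrawing into a $4$-plane part and a remainder; by Proposition~\ref{prop:smallvalues} with $k=4$ a convex $4$-plane graph on $n'$ vertices has at most $4n'-7 \le 4n'$ edges, and every edge outside a maximum $4$-plane subgraph must be crossed at least $5$ times, so $\crn(H) \ge \frac{5}{2}\big(e(H)-4n(H)\big)$. Taking expectations gives $p^4\crn \ge \frac{5}{2}(p^2 e - 4 p n)$, i.e. $\crn \ge \frac{5}{2}\big(e/p^2 - 4n/p^3\big)$; optimizing (now the optimum is $p = 6n/e$, valid since $e \ge \frac92 n$ forces... ) and simplifying produces the constant $\frac{20}{243}$ together with the threshold $e\ge\frac92 n$. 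The hardest part will be choosing the right ``small $k$'' to pivot on and verifying that the optimal sampling probability stays in $(0,1]$ exactly under the hypothesis $e \ge \frac{9}{2}n$; once the correct pivot ($k=4$, edges outside crossed $\ge 5$ times, coefficient $\frac52$, bound $4n'$) is identified, the optimization is a one-line calculus exercise giving $\crn \ge \frac{5}{2}\cdot\frac{e^3}{6^2 n^2} - \frac{5}{2}\cdot\frac{4 e^3}{6^3 n^2} = \frac{20}{243}\frac{e^3}{n^2}$, and the constant matches.
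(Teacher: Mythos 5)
Your baseline step is fine and you have correctly identified the overall strategy (the standard probabilistic crossing-lemma argument fed with the convex edge bounds of Proposition~\ref{prop:smallvalues}), but the refinement you propose to reach $\frac{20}{243}$ breaks down for three separate reasons. First, the deterministic input is false: it is not true that every edge outside a maximum $4$-plane subgraph $H'$ is crossed at least $5$ times. Adding such an edge $f$ to $H'$ may destroy $4$-planarity merely because some edge of $H'$ that already had exactly $4$ crossings picks up a fifth one from $f$, while $f$ itself crosses very few edges. (The analogous statement for $k=0$, which your baseline implicitly uses, is true precisely because a crossing created by adding $f$ to a crossing-free graph must involve $f$; this does not generalize to $k=4$.) Second, even granting $\crn(H)\ge\frac{5}{2}\bigl(e(H)-4n(H)\bigr)$, your closing arithmetic is wrong: optimizing at $p=6n/e$ gives $\frac{5}{2}\cdot\frac{e^3}{36n^2}-\frac{5}{2}\cdot\frac{4e^3}{216n^2}=\frac{5}{216}\frac{e^3}{n^2}\approx 0.023\,\frac{e^3}{n^2}$, not $\frac{20}{243}\frac{e^3}{n^2}$ --- in fact \emph{worse} than your own $\frac{1}{27}$ baseline, because the factor $\frac{1}{2}$ from double-counting crossings eats the gain. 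Third, $p=6n/e\le 1$ requires $e\ge 6n$ rather than $e\ge\frac{9}{2}n$, so the optimization is not admissible under the lemma's hypothesis (your trailing ellipsis suggests you sensed this).

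The paper's proof avoids all three issues by using \emph{all} of the cases $k=0,\dots,4$ of Proposition~\ref{prop:smallvalues} simultaneously, via a greedy deletion process instead of a maximum $k$-plane subgraph: repeatedly delete an edge with the largest number of crossings. While more than $4n-7$ edges remain, the deleted edge destroys at least $5$ crossings; while more than $\frac{7}{2}n-6$ remain, at least $4$; and so on down to $2n-3$. Each crossing is counted only once (at the moment the first of its two edges is deleted), so there is no factor $\frac{1}{2}$, and the telescoping sum yields $\crn(G)\ge 5e-15n+25$: coefficient $5$ on $e$ and only $15$ on $n$. Feeding $\crn\ge 5e-15n$ into your sampling argument gives $\crn\ge\frac{5e}{p^2}-\frac{15n}{p^3}$ with optimum $p^*=\frac{9n}{2e}$, which lies in $(0,1]$ exactly under the hypothesis $e\ge\frac{9}{2}n$ and evaluates to $\frac{20}{243}\frac{e^3}{n^2}$. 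Replacing your single-pivot bound by this telescoped one repairs the argument.
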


The rest of the section is structured as follows: First we deduce Proposition~\ref{prop:smallvalues} from a lemma proved by Pach and T\'{o}th in~\cite{pach1997crossings}. We will then use the bounds from Proposition~\ref{prop:smallvalues} to carry through an analogue of the well-known probabilistic argument for the crossing lemma to establish the convex crossing lemma. Finally, we use a double-counting argument for the number of crossings to prove the bound in Theorem~\ref{thm:generalupperbound}. 

Let us start with the following Lemma by Pach and T\'{o}th. To state the lemma properly, we need some terminology as follows: 

Given a (multi-)graph $G$ drawn in the plane without crossings, a \emph{face} $\Phi$ of $G$ is a maximal connected region in the complement of the drawing. Following the notation in~\cite{pach1997crossings}, the number $|\Phi|$ of \emph{sides} of $\Phi$ is defined as the number of edges on the boundary walk of $\Phi$, where edges whose both sides are on the boundary walk of $\Phi$ are counted with multiplicity $2$. Furthermore, by $t(\Phi)$ we denote the number of triangles in a triangulation of the inside of $\Phi$ (using vertices from $G$).
We also need the following notion: Given a (multi-)graph $M$ drawn in the plane, consider a crossing-free subgraph $M'$ of $M$ with a maximum number of edges. Then every edge $e \in E(M) \setminus E(M')$ clearly must cross at least one edge of $M'$. The closed portion between an endpoint of $e$ and the nearest crossing of $e$ with an edge of $M'$ is called a \emph{half-edge}. Note that every edge in $E(M) \setminus E(M')$ contributes exactly two half-edges.

\begin{lemma}[cf.~\cite{pach1997crossings}, Lemma~2.2]\label{lemma:technical}
Let $k \in \{0,1,2,3,4\}$ and let $M$ be a (multi-)graph drawn in the plane so that every edge crosses at most $k$ others. Let $M'$ be a planar subgraph of $M$ with a maximum number of edges. Let $\Phi$ be a face of $M'$ with $|\Phi| \ge 3$ sides in $M'$. Then there are at most $t(\Phi)k+|\Phi|-3$ half-edges contained in the closed interior of $\Phi$. 
\end{lemma}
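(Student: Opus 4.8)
\textbf{Proof proposal for Lemma~\ref{lemma:technical}.}

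The plan is to follow the proof of Pach and T\'oth closely, adapting it to the exact bookkeeping we need here, and arguing by induction on the number of half-edges inside $\Phi$. Fix the maximum planar subgraph $M'$ and the face $\Phi$ with $|\Phi|\ge 3$ sides. First I would dispose of the base case: if $\Phi$ contains no half-edges in its closed interior, the bound $t(\Phi)k + |\Phi| - 3 \ge |\Phi| - 3 \ge 0$ holds trivially. For the inductive step, pick some edge $e \in E(M)\setminus E(M')$ that has a half-edge inside $\Phi$; by maximality of $M'$, $e$ crosses some edge of $M'$, and since $e$ crosses at most $k$ edges total, we can control how $e$ interacts with the boundary walk of $\Phi$. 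The key structural observation is that the portion of $e$ inside $\Phi$ either (a) has an endpoint of $e$ in $\Phi$ and leaves $\Phi$ through a boundary edge — contributing one half-edge — or (b) enters and exits through boundary edges, crossing them, and contributes pieces that can be charged to smaller faces after we cut $\Phi$.

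The main idea is a cutting argument: take a suitable edge $e$ (or half-edge) crossing into $\Phi$ and use it, together with parts of the boundary, to subdivide $\Phi$ into two faces $\Phi_1, \Phi_2$ of a slightly enlarged planar graph $M''$ (obtained from $M'$ by adding a crossing-free arc along part of $e$). One then needs the two inequalities $t(\Phi_1) + t(\Phi_2) \le t(\Phi)$ (triangulation counts are subadditive under such a cut, possibly with a constant loss that has to be tracked) and $(|\Phi_1| - 3) + (|\Phi_2| - 3) \le |\Phi| - 3 - c$ for an appropriate constant $c$ that pays for the half-edges of $e$ we have removed. Summing the inductive bounds for $\Phi_1$ and $\Phi_2$ then yields the bound for $\Phi$. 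The restriction $k \le 4$ enters precisely here: it guarantees that the number of half-edges destroyed in one cutting step is large enough relative to the change in $t(\Phi)k + |\Phi| - 3$, i.e., that $e$ cannot cross too many boundary edges and re-enter $\Phi$ too often without the accounting failing. For $k \ge 5$ the analogous local inequality breaks, which is why a separate (weaker) argument is needed later for Theorem~\ref{thm:generalupperbound}.

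The subtle points I expect to be the main obstacle are (i) choosing \emph{which} edge or half-edge to cut along so that both resulting faces are genuinely simpler (fewer half-edges) and the parameters $t(\cdot)$ and $|\cdot|$ behave additively, and (ii) carefully handling degenerate configurations — an edge of $M\setminus M'$ that crosses the same boundary edge of $\Phi$ twice, or whose endpoint lies on $\partial\Phi$, or a face $\Phi$ that is not simply connected (so its boundary walk visits a cut vertex or a bridge of $M'$ with multiplicity). These cases force the multiplicity-$2$ convention in the definition of $|\Phi|$ and need to be checked to see that the counting still goes through. Since this lemma is quoted essentially verbatim from \cite{pach1997crossings}, I would in practice only reproduce the induction skeleton and the crucial local inequality, citing \cite{pach1997crossings} for the remaining case analysis; but a self-contained write-up must verify that the convex position of the vertices (which we will exploit when applying the lemma) plays no role here — the lemma is purely about an abstract plane multigraph drawing, so no extra care is needed on that front.
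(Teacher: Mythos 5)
The first thing to note is that the paper does not prove this lemma at all: it is imported verbatim from Pach and T\'oth (the ``cf.~\cite{pach1997crossings}, Lemma~2.2'' in the statement is the entire justification, and the surrounding text says only that Proposition~\ref{prop:smallvalues} is \emph{deduced from} a lemma \emph{proved by} Pach and T\'oth). So your instinct at the end --- cite \cite{pach1997crossings} and only check that nothing in the present setting invalidates the quoted statement --- is exactly what the paper does, and your closing remark that convexity plays no role here (the lemma is about an arbitrary plane drawing of a multigraph) is the one sanity check that actually matters for this paper.

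As a self-contained proof, however, what you have written is a plan rather than an argument, and the plan has a genuine gap at its center: the ``crucial local inequality'' that is supposed to drive the induction is never stated, let alone proved. You posit a cut of $\Phi$ into $\Phi_1,\Phi_2$ and two inequalities containing an unspecified ``constant loss that has to be tracked'' and ``an appropriate constant $c$ that pays for the half-edges of $e$ we have removed'' --- but whether such a $c$ exists, and whether it is large enough to account for all half-edges destroyed by the cut, is precisely the content of the lemma. Without specifying which arc you cut along, why the two sides are strictly simpler, and how $t(\cdot)$ and $|\cdot|$ change (note that $t(\Phi_1)+t(\Phi_2)=t(\Phi)$ with equality when you cut along a chord of a triangulation, so there is no slack there to absorb losses elsewhere), the induction does not close. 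Likewise, your explanation of where $k\le 4$ enters is explicitly conjectural (``I expect'', ``it guarantees\dots''); in Pach and T\'oth's actual argument the restriction arises in a concrete case analysis (essentially in bounding the half-edges inside a triangular face via an extremal choice of half-edge and the maximality of $M'$), not from a generic accounting of boundary re-entries. None of this is fatal for the paper --- the correct move is simply to cite the lemma --- but if you intend the write-up to stand on its own, the base case ``no half-edges'' plus an unquantified cutting step is not yet a proof.
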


With this lemma at hand, we can now easily deduce Proposition~\ref{prop:smallvalues}. 
\begin{proof}[Proof of Proposition~\ref{prop:smallvalues}]
Let $k \in \{0,1,2,3,4\}$ and let $G$ be a given $n$-vertex graph embedded geometrically into the plane such that every edge crosses with at most $k$ other edges. Let $G'$ be a planar subgraph of $G$ with a maximum number of edges. W.l.o.g. we may assume that $G'$ contains all the $n$ vertices and all $n$ boundary edges (that is, edges between consecutive vertices in the cyclical ordering of the vertices, since such edges can always be added without introducing any additional crossings).  Then $G'$ is a straight-line embedding of an outerplanar $n$-vertex graph. For each face $\Phi$ of $G'$, pick and fix a triangulation of this face using $t(\Phi)$ triangles. Now consider the union of all these triangulations. Clearly, this union forms a triangulation of a convex $n$-gon, and hence, has exactly $n-2$ triangles. It follows, denoting by $\mathcal{F}$ the set of all interior faces of $G'$: 
\[
\sum_{\Phi \in \mathcal{F}}{t(\Phi)}=n-2.
\]
Note that every half-edge by definition does not cross any edge of $G'$ and hence is fully contained in one of the interior faces of $G'$. Using Lemma~\ref{lemma:technical} we find that the number of half-edges is at most
\begin{align*}
\sum_{\Phi \in \mathcal{F}}{\left(t(\Phi)k+|\Phi|-3\right)}&=
k\sum_{\Phi \in \mathcal{F}}{t(\Phi)}+\sum_{\Phi \in \mathcal{F}}{|\Phi|}-3|\mathcal{F}| \\
&=k(n-2)+(2e(G')-n)-3(f(G')-1),
\end{align*}
where we used the fact that the outer face of $G'$ contributes exactly $n$ edges which are counted only once in the sum $\sum_{\Phi \in \mathcal{F}}{|\Phi|}$.
Clearly, the number of half-edges equals $2(e(G)-e(G'))$, and hence we conclude:
\[
2e(G)-2e(G') \le k(n-2)+(2e(G')-n)-3f(G')+3.
\] 
Rearranging, applying Euler's formula $n-e(G')+f(G')=2$, and using the fact that $f(G') \le n-1$ now yields:
\begin{align*}
2e(G) &\le k(n-2)-n+3+4e(G')-3f(G') \\
&=k(n-2)-n+3+4(e(G')-f(G'))+f(G') \\
&\le k(n-2)-n+3+4(n-2)+(n-1)=(k+4)n-(2k+6).
\end{align*}

Dividing by two yields the claimed bound $e(G) \le \frac{k+4}{2}n-(k+3)$, as desired.
\end{proof}

Let us now give the proof of the ``Convex Crossing Lemma''. 

\begin{proof}[Proof of Lemma~\ref{lemma:convexcrossing}]
We consider the following process: We start with $G$, and repeatedly update a subgraph of $G$. As long as the current subgraph still contains crossings, pick an edge with the highest number of crossings from the current drawing and remove it. Repeat until we end up with a crossing-free subgraph. As long as the current subgraph has more than $4n-7$ edges, by Proposition~\ref{prop:smallvalues} applied for $k=4$, it follows that the current subgraph contains edges which cross with at least $5$ other edges, and hence, in the next step the edge we remove will remove at least $5$ crossings from $G$. Similarly, as long as the number of edges in the subgraph is strictly greater than $\frac{7}{2}n-6$, by Proposition~\ref{prop:smallvalues} the edges we remove will be such with at least $4$ crossings, if it is strictly greater than $3n-5$, then we remove at least $3$ crossings at each step, etc. 

Summing up all these different contributions of how many crossings are lost in the process by removing edges, we obtain the following lower bound on the number of crossings of $G$:
\begin{align*}
\crn(G) &\geq 5\left(e - (4n-7)\right) + 4\left((4n-7) - \left(\frac{7}{2}n-6\right)\right) + 3\left(\left(\frac{7}{2}n-6\right) - (3n-5)\right) + \\
&2\left((3n-5)-\left(\frac{5}{2}n-4\right)\right) + \left(\left(\frac{5}{2}n-4\right) - (2n-3)\right) = 5e - 15n+25.
\end{align*}

So, let $p \in [0,1]$ be a probability to be determined later and let $G_p$ be the induced random subgraph of $G$, where each vertex is picked with probability $p$. Then we have:
\begin{align*}
\E[e(G_p] &= p^2e(G) \\
\E[v(G_p)] &= pn \\
\E[\crn(G_p)] &= p^4\cdot \crn(G)
\end{align*}

By the above inequality, which also holds for the convex geometric subgraph $G_p$ of $G$, we get:
\[
\crn(G_p) \geq 5e(G_p) - 15 v(G_p)
\]
Taking expectations and dividing by $p^4$, yields:
\[
\crn(G) \geq \frac{5e(G)}{p^2} - 15\frac{n}{p^3}
\]
Plugging in the optimal value $p^* = \frac{9n}{2e} \le 1$ (here we used our assumption $e \ge \frac{9}{2}n$) yields the desired result.
\end{proof}

Finally, let us conclude this subsection by proving our main result concerning an upper bound on the number of edges in convex $k$-plane graphs claimed in Theorem~\ref{thm:generalupperbound}.

\begin{proof}[Proof of Theorem~\ref{thm:generalupperbound}]
Let $k \ge 5$ and let $G$ be a convex $k$-plane graph with $n$ vertices. Since $\sqrt{\frac{243}{40}k} \ge \sqrt{\frac{243}{8}} \ge 5.5>\frac{9}{2}$, the claim would be proved as soon as the number of edges of $G$ is at most $\frac{9}{2}n$, so in the following we may assume w.l.o.g. $e:=e(G)>\frac{9}{2}n$. Hence, we may apply the convex crossing lemma and find:
\[
\crn(G) \ge \frac{20}{243}\frac{e^3}{n^2}
\]
On the other hand, since $G$ is convex $k$-plane we know that every edge participates in at most $k$ crossings. By double-counting (noting that at least two edges participate in every crossing), we get that
\[
\frac{ke}{2} \ge \crn(G) \ge \frac{20}{243}\frac{e^3}{n^2}.
\]
Rearranging the inequality yields the claim. 
\end{proof}

\subsection{Proof of \Cref{prop:3planarcase} and \Cref{thm:general}}\label{app:sec:k_planar_prop1}
In this section we prove Proposition~\ref{prop:3planarcase} and \Cref{thm:general} by applying the edge-bounds established in the previous section.

\propThreePlanarcase*

\begin{proof}
For the upper bound, we show that the following algorithm always gives a valid partition into 1-planar graphs with at most $\left\lceil\frac{n}{3}\right\rceil$ colors:
W.l.o.g. assume that $P$ forms a regular $n$-gon. Now order the segments in $K(P)$ in a circular manner according to their slope, group the sets of the $n$ possible slopes into $\left\lceil\frac{n}{3}\right\rceil$ circular intervals of size $\le 3$, and make every set of segments whose slopes fall into the same interval a color class (see Figure~\ref{fig:1_planar_3_slopes}). 

\begin{figure}
\centering
\includegraphics[scale=0.4,page=1]{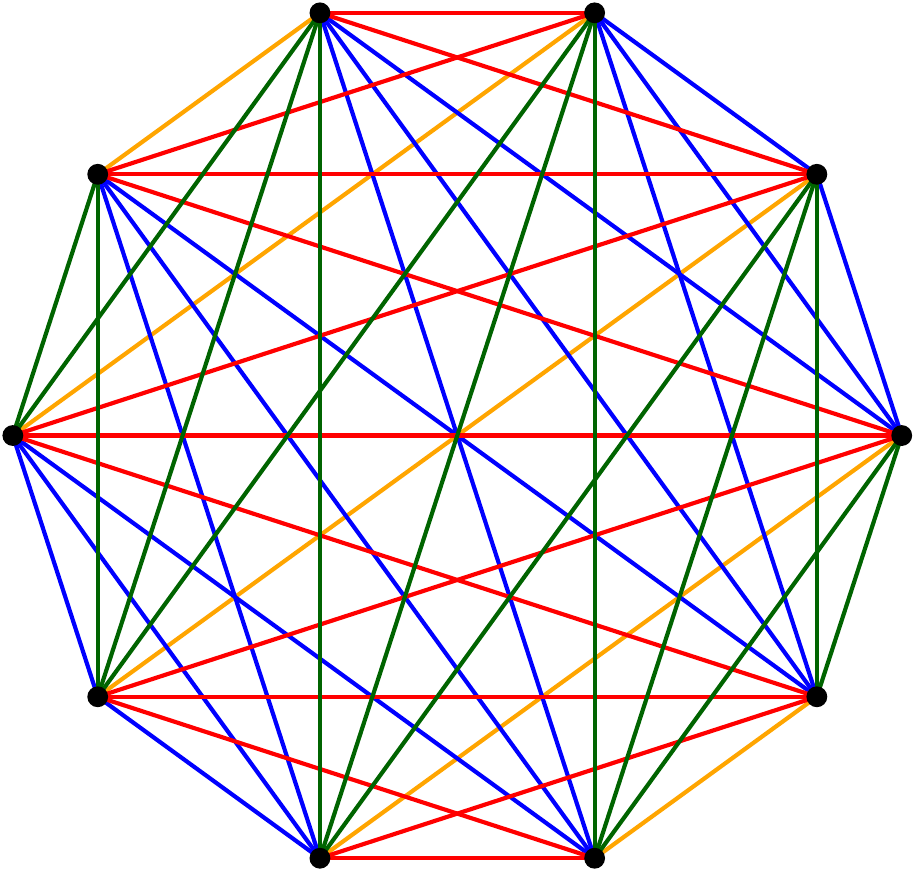}
\caption{Each color class contains all edges of up to three consecutive slope values.}
\label{fig:1_planar_3_slopes}
\end{figure}

For the lower bound, suppose we are given any $1$-planar partition $E_1,\ldots,E_c$ of $K(P)$, and let us show that $c \ge \left\lceil\frac{n}{3}\right\rceil$. Let $O \subseteq E(K(P))$ be the set of the $n$ boundary-edges of $K(P)$, and note that they are not crossed by any other edges. Hence, for every $i \in \{1,\ldots,c\}$, the geometric graph formed by the points in $P$ and the edges in $O \cup E_i$ must be $1$-planar, and hence by Proposition~\ref{prop:smallvalues} we have $|O \cup E_i| \le \frac{5}{2}n-4$ for all $i \in \{1,\ldots,c\}$. This clearly implies that $|E_i \setminus O| \le \left(\frac{5}{2}n-4\right)-n=\frac{3}{2}n-4$ for every $i$. Since the sets $E_i \setminus O, i=1,\ldots,c$ form a partition of the set of all $\frac{n(n-1)}{2}-n=\frac{n(n-3)}{2}$ interior edges of $K(P)$, we conclude:
\[
c \ge \frac{\left(\frac{n(n-3)}{2}\right)}{\frac{3}{2}n-4}=\frac{n(n-3)}{3n-8}=\frac{n}{3}-\frac{n}{3(3n-8)}>\frac{n}{3}-\frac{1}{3}.\]
(For the last inequality we used that $n \ge 5$). This implies $c \ge \lceil\frac{n}{3}\rceil$, as claimed. This concludes the proof of the proposition.
\end{proof}

\thmGeneral*

\begin{proof}
Let us first prove the upper bound. To this end, suppose that $s \ge 2$ is such that $\frac{(s-1)(s-2)}{2} \le k$, and let us show that $K(P)$ can be partitioned into $\lceil\frac{n}{s}\rceil$ $k$-planar subgraphs. W.l.o.g. assume that the points in $P$ form a regular $n$-gon. Consider all possible $n$ slopes of segments and partition them into $\lceil \frac{n}{s} \rceil$ circular intervals of size at most $s$. Then, define a color class for all edges whose slopes fall into a common interval of this partition. 

To show that all subgraphs are $\frac{(s-1)(s-2)}{2}$-planar, note that edges cannot be crossed by other edges of the same slope or clockwise slope difference $1$; by at most one edge with a clockwise slope difference $2$, by at most two edges with clockwise slope difference $3$, and so on, and symmetrically for counterclockwise slope differences. Hence, if an edge $e$ has color $i$, and if the slope of $e$ is the $j$-th slope ($j \in \{1,\ldots,s\}$) in its circular interval of slopes, then $e$ can cross with at most the following amount of edges of color $i$:
\begin{align*}
\sum_{1 \le k<j-1}{(j-k-1)}+\sum_{j+1<k \le s}{(k-j-1)}&=\frac{(j-1)(j-2)}{2}+\frac{(s-j)(s-j-1)}{2} = \\
=\frac{(s-1)(s-2)}{2}-(s-j)(j-1)
&\le \frac{(s-1)(s-2)}{2}.
\end{align*}

For the lower bound, note that $K(P)$ has $\frac{n(n-1)}{2}$ edges, and that in every $k$-planar partition of $K(P)$, every color class induces a convex $k$-plane subgraph on $n$ vertices. Hence, by Theorem~\ref{thm:generalupperbound}, every color class has size at most $\sqrt{\frac{243}{40}k} \cdot n$. So, the number of colors required in any $k$-planar partition is at least 
\[
\frac{\left(\frac{n(n-1)}{2}\right)}{\sqrt{\frac{243}{40}k} \cdot n} \ge \frac{n-1}{4.93\sqrt{k}}.
\] 
This concludes the proof. 
\end{proof}

The following intriguing question is left open by our study.

\begin{question}\label{ques:k_planar}
Is the upper bound in Theorem~\ref{thm:general} tight up to lower-order terms?
\end{question}

More generally, it would be interesting to shed some more light on the ``in-between-cases'' coming out of the upper bound in Theorem~\ref{thm:general}. 

From Theorem~\ref{thm:general} it follows directly that every convex geometric $K_n$ can be partitioned into $\lceil \frac{n}{s} \rceil$ $k$-planar subgraphs where $s$ is the smallest integer such that $\frac{(s-1)(s-2)}{2} \ge k$. However, it is very natural to ask whether in the case that $k$ is far from this upper bound $\frac{(s-1)(s-2)}{2}$, we may be able to improve upon $\lceil \frac{n}{s} \rceil$, or whether we can show a matching lower bound. This question is surprisingly difficult. For example, for $k=2$ we can show a lower bound of $\frac{3n}{10}$, almost matching the upper bound $\frac{n}{3}$ (using computer assistance); we, however, omit this result from this version.

\section{Partitions into $\mathbf{k}$-\quasiplanar subgraphs and spanning trees}\label{sec:k-quasi-planar}

\newcommand\cquasi{\ensuremath{\lceil\frac{n}{k-1}\rceil}}
\newcommand\cquasin{\ensuremath{\lceil\frac{n}{2(k-1)}\rceil}}

In this section, we develop bounds on the number of colors required in a $k$-\quasiplanar partition for point sets in general position (for $k=2$ this again amounts to the setting of plane subgraphs, hence we assume $k \geq 3$ in the following).
The setting of spanning trees is resolved by \Cref{lem:3_quasiplanar_st}:

\lemThreeQuasiplanarST*

\begin{proof}
Order the points in $P$ by increasing $x$-coordinate (w.l.o.g. no two points have the same $x$-coordinate, otherwise pick a different direction) and label them as $p_1, \dots, p_{2n}$.
Further, we will distinguish the points into evenly indexed points $p_{2i}$ for $i = 1, \dots, n$ and oddly indexed points $p_{2i-1}$ for $i = 1, \dots, n$.

The goal is to define $n$ double stars.
We construct a double star $T_i$ for any two consecutive vertices $\{p_{2i-1}, p_{2i}\}$, for $i = 1, \dots, n$.
To construct $T_i$, we connect $p_{2i-1}$ with all evenly indexed points left of $p_{2i-1}$, that is, $p_{2j}$ for $j = 1,\dots,i-1$ and all oddly indexed points right of $p_{2i-1}$, that is, $p_{2j-1}$ for $j = i+1, \dots, n.$
Further, we connect $p_{2i}$ with the remaining vertices, namely with all oddly indexed points left of $p_{2i}$, that is, $p_{2j-1}$ for $j=1,\dots,i$ and all evenly indexed points right of $p_{2i}$, that is, $p_{2j}$ for $j = i+1, \dots, n$.
Note that this includes the edge $p_{2i-1}p_{2i}$.
$T_i$ indeed forms a double star, as all vertices are connected to either $p_{2j-1}$ or $p_{2j}$. Thus, it also covers all vertices and forms a spanning tree.
It remains to show, that every edge is covered exactly once.
By construction, any edge between two vertices of the same parity, that is, $p_{2i}$ and $p_{2j}$ or $p_{2i-1}$ and $p_{2j-1}$ for some $1 \leq i < j \leq n$ belongs to the spanning tree $T_i$ of the left vertex.
Conversely, for an edge between two vertices of different parity, that is, $p_{2i-1}$ and $p_{2j}$ or $p_{2i}$ and $p_{2j-1}$ for some $1 \leq i \leq j \leq n$, the edge belongs to the spanning tree $T_j$ of the right vertex.
As these two cases cover all edges, $T_1, \dots, T_n$ is indeed a partition.
Noting that any double star is necessarily 3-\quasiplanar, the claim follows.
\end{proof}

So, we turn our attention to the subgraph setting.
Consider a point set $P$ of size $2n$ with a crossing family of size $n$. Let $\ell_1, \dots, \ell_n$ be the corresponding halving lines. We label these lines in clockwise order, more precisely their intersections with a sufficiently large circle appear in clockwise order.
For each halving line $\ell_i$ define an infinitesimally counter-clockwise rotated line $\ell'_i$, such that the two defining vertices (say $p_i,q_i$) of this line lie to either side of $\ell'_i$. Define $\ell_i^+$ to be the upper halfplane (bounded by $\ell'_i$) and let it contain $p_i$; similarly $\ell_i^-$ denotes the lower halfplane (bounded by $\ell'_i$) and it contains $q_i$. See \autoref{fig:quasiplanar_definitions} for an illustration.

  \begin{figure}
 	\centering
 	\includegraphics[scale=0.5, page=1]{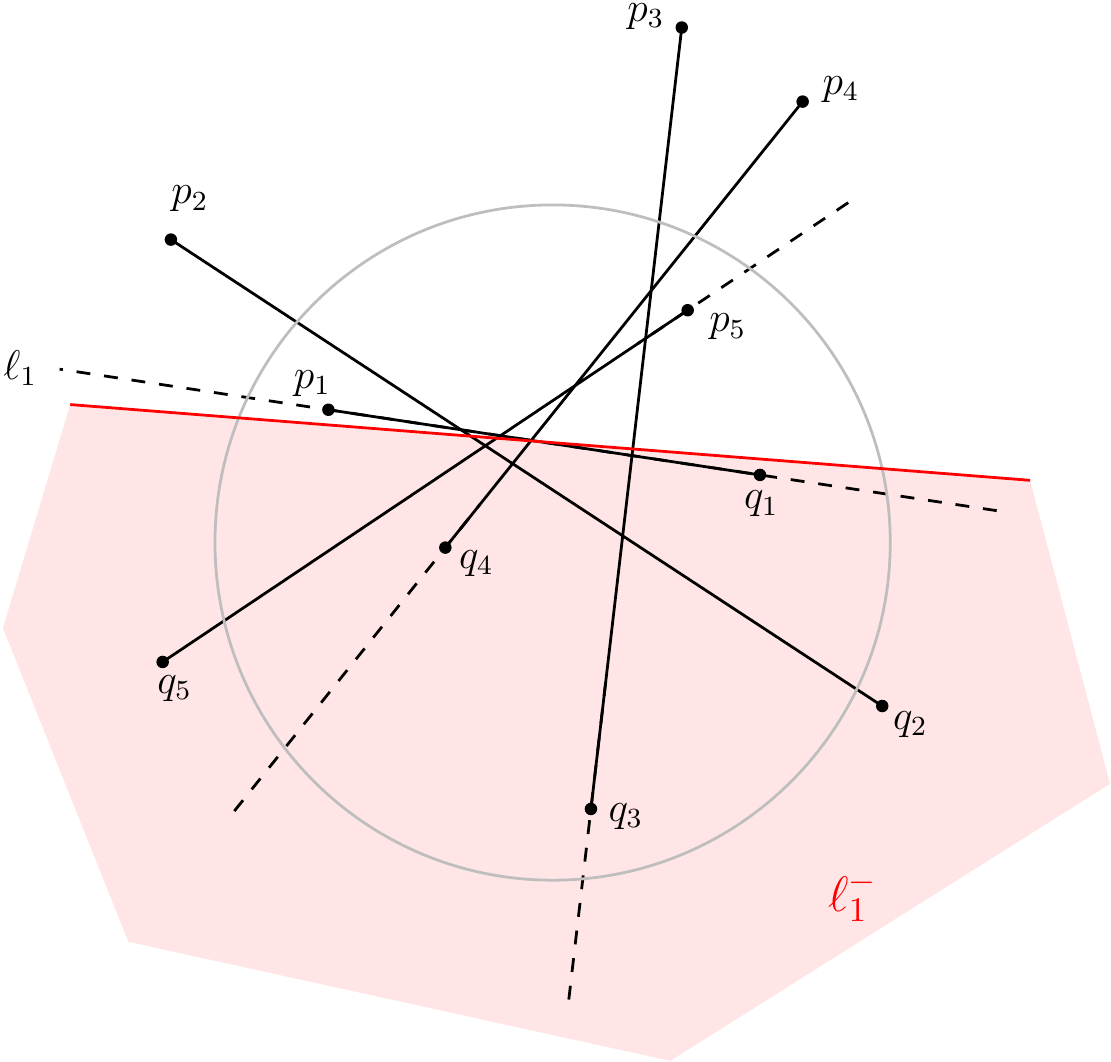}
 	\caption{The labeling of the halving lines $\ell_1, \dots, \ell_n$ in clockwise order. Illustration of the definition of the halfplane $\ell_1^-$. Note that $p_i \in \ell_1^+$ and $q_i \in \ell_1^-$ for any $i$.}
 	\label{fig:quasiplanar_definitions}
 \end{figure}
 
\begin{restatable}{lemma}{lemHalvingquasi}\label{lem:halvingquasi}
	Let $P$ be a point set of size $2n$, with a crossing family of size $n$, then $\cquasi$ colors are required and sufficient to partition $K(P)$ into $k$-\quasiplanar subgraphs.
\end{restatable}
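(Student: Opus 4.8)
The plan is to treat the two inequalities separately: the lower bound is a direct crossing-family count, while the upper bound needs an understanding of how an arbitrary edge of $K(P)$ meets the halving edges $\ell_1,\dots,\ell_n$.

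\textbf{Lower bound.} The $n$ halving edges $\ell_1,\dots,\ell_n$ pairwise cross, so a $k$-\quasiplanar subgraph of $K(P)$ can contain at most $k-1$ of them; since each $\ell_i$ must be placed in some color class, at least $\cquasi$ classes are needed.

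\textbf{The structural tool.} For the upper bound I would first prove a statement about the halving edges, labelled and oriented as in \Cref{fig:quasiplanar_definitions} (so $p_i\in\ell_i^{+}$, $q_i\in\ell_i^{-}$): for every edge $e=uv$ of $K(P)$ the set $I(e):=\{\,i:e\text{ crosses }\ell_i\,\}$ is a contiguous cyclic interval of $\{1,\dots,n\}$, and it avoids the index of every halving edge incident to $u$ or $v$. Since a crossing family of the maximum possible size $n$ forces every point of $P$ to be an endpoint of exactly one halving edge, $P$ is the disjoint union $\{p_1,\dots,p_n\}\cup\{q_1,\dots,q_n\}$; one can then describe each halfplane $\ell_i^{+}$ as a block of $n$ consecutive points in the natural circular order on these $2n$ points, and since $e$ crosses $\ell_i$ exactly when $u$ and $v$ lie on opposite sides of $\ell_i'$, reading off this description yields the claim. (Alternatively: rotating a halving line through all directions in $[0,\pi)$, the side it assigns to $u$ changes exactly at the direction of the unique halving edge through $u$, so ``$u,v$ on opposite sides'' toggles at most twice.) Establishing this cleanly — in particular the ``exactly $n$ halving edges'' fact — is one of the two delicate points.

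\textbf{Construction of the partition.} Granting the structural tool, I would partition $\{1,\dots,n\}$ into $c:=\cquasi$ consecutive cyclic blocks $A_1,\dots,A_c$, each of size at most $k-1$, and give $\ell_i$ the color $t$ with $i\in A_t$. A non-halving edge $e$ with $I(e)\neq\emptyset$ is an interval whose two outer neighbours are precisely the halving indices of $u$ and $v$; I would assign $e$ to the block containing the clockwise-first index of $I(e)$, but with a correction near block boundaries so that the block of $e$'s color is never entirely contained in $I(e)$ (for instance, shifting $e$ to the neighbouring block when $I(e)$ starts exactly at the first index of a block). The remaining edges, those crossing no halving edge, require a separate small argument: one expects them to induce a plane subgraph, which could then be placed entirely into color $1$.

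\textbf{Verification and the main obstacle.} To finish, I would check that each color class $t$ is $k$-\quasiplanar. If it contained $k$ pairwise crossing edges, say $h$ halving ones $\ell_{i_1},\dots,\ell_{i_h}$ (so $\{i_1,\dots,i_h\}\subseteq A_t$ and $h\le|A_t|\le k-1$) and $p:=k-h\ge 1$ non-halving ones $f_1,\dots,f_p$ of color $t$, then each $f_m$ crosses each $\ell_{i_l}$, hence $\{i_1,\dots,i_h\}\subseteq\bigcap_m I(f_m)$; combining this with the fact that $f_1,\dots,f_p$ pairwise cross and with the colouring rule, one should derive that $\bigcap_m I(f_m)$ meets $A_t$ in fewer than $h=k-p$ indices, a contradiction. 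Making this counting go through — i.e.\ bounding $|A_t\cap\bigcap_m I(f_m)|$ in terms of $p$ while handling the wrap-around of the cyclic intervals $I(f_m)$ and the boundary cases of the colouring rule — is the technical heart of the proof and the step I expect to be the main obstacle.
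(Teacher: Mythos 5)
Your lower bound is the same one-line argument as the paper's. For the upper bound, however, your plan is genuinely different from the paper's construction and, more importantly, it has a gap that I do not think can be repaired in the form you describe. Your colouring rule assigns a non-halving edge $e$ a colour that is a function of the index set $I(e)$ alone; consequently two edges $f,f'$ with $I(f)=I(f')$ always receive the same colour. Now take $I(f)=I(f')=\{1,\dots,k-2\}$ with $f$ and $f'$ crossing each other (for instance, place the two endpoints of $f$ and the two endpoints of $f'$ into two fixed cells of the arrangement of the lines $\ell'_1,\dots,\ell'_n$, one endpoint of each edge per cell, interleaved so that the segments cross; nothing in the hypothesis of the lemma forbids such a configuration). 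Whatever correction you apply at block boundaries, both edges satisfy $A_1\not\subseteq I(\cdot)$ and their intervals start at index $1$, so both land in colour class $1$, and then $f$, $f'$ together with $\ell_1,\dots,\ell_{k-2}$ form $k$ pairwise crossing edges of one colour. Your verification sketch implicitly assumes that each additional non-halving edge of a class shrinks $\bigcap_m I(f_m)\cap A_t$ by at least one, which is exactly what fails here. In addition, the two auxiliary claims you flag --- that $I(e)$ is always a contiguous cyclic interval, and that the edges with $I(e)=\emptyset$ induce a plane graph --- are themselves unproven, and the second one fails by the same mechanism (two crossing edges whose four endpoints lie in only two cells of the arrangement).

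The paper avoids all of this by colouring according to the \emph{endpoints} rather than the crossing pattern: it groups the $k-1$ halving edges of a block together with their $2(k-1)$ endpoints $X_l$, and takes $K(X_l)$ plus two complete bipartite graphs joining $X_l$ to the rest of $P$ separately inside each of the two halfplanes of one perturbed halving line $\ell'$. Every edge of a crossing family inside one colour class is then incident to one of at most $k-1$ vertices of $X_l$ lying on a single side of $\ell'$, which bounds the crossing family by $k-1$ without any interval combinatorics. To salvage your approach you would need a rule that separates crossing edges with identical sets $I(e)$, which essentially forces you back to distinguishing edges by their endpoints.
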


\begin{proof}
	Since we have a crossing family of size $n$, we need at least $\cquasi$ different colors to partition $K(P)$ into $k$-\quasiplanar subgraphs.
	
	The other direction is a bit more involved. We build $c := \cquasi$ subgraphs $G_1, \ldots, G_{c}$ of $K(P)$ as follows.
	Each subgraph $G_i$ in turn is formed by the union of three subgraphs.

To construct $G_1$, let $X_1$ be the collection of defining vertices of the first $k-1$ consecutive halving lines starting from $\ell_1$, that is, $X_1 := \{p_1, \ldots, p_{k-1}, q_1, \ldots, q_{k-1}\}$.
Next, we consider all points in $\ell_1^+$ and form the complete bipartite graph $B_1^+$ between points both in $\ell_1^+$ and in $X_1$, that is, point set $X_1 \cap \ell_1^+$ and points in $\ell_1^+$ but not in $X_1$, that is, point set $(P \setminus X_1) \cap \ell_1^+$.
Symmetrically, we form the complete bipartite graph $B_1^-$ between the point set $X_1 \cap \ell_1^-$ and the point set $(P \setminus X_1) \cap \ell_1^-$.
An illustration of the construction is given in \Cref{fig:k-quasi-bipartite}.
The subgraph $G_1$ is finally defined to be the union of $K(X_1), B_1^+$ and $B_1^-$.

	We iteratively repeat the same process for the next $k-1$ halving lines until reaching~$p_n$. More precisely, $G_l$ consists of the union of the complete graph with vertex set
	$X_l = \{p_i,q_i\,|\,(l-1)(k-1)<i \le \min(l(k-1),n)\}$ 
	and the two bipartite graphs defined by $\ell_{(l-1)(k-1)+1}$ as before.
The last graph $G_{c}$ may be formed by less than $k-1$ halving lines.
	
	\begin{figure}
		\centering
		\includegraphics[scale=0.45, page=1]{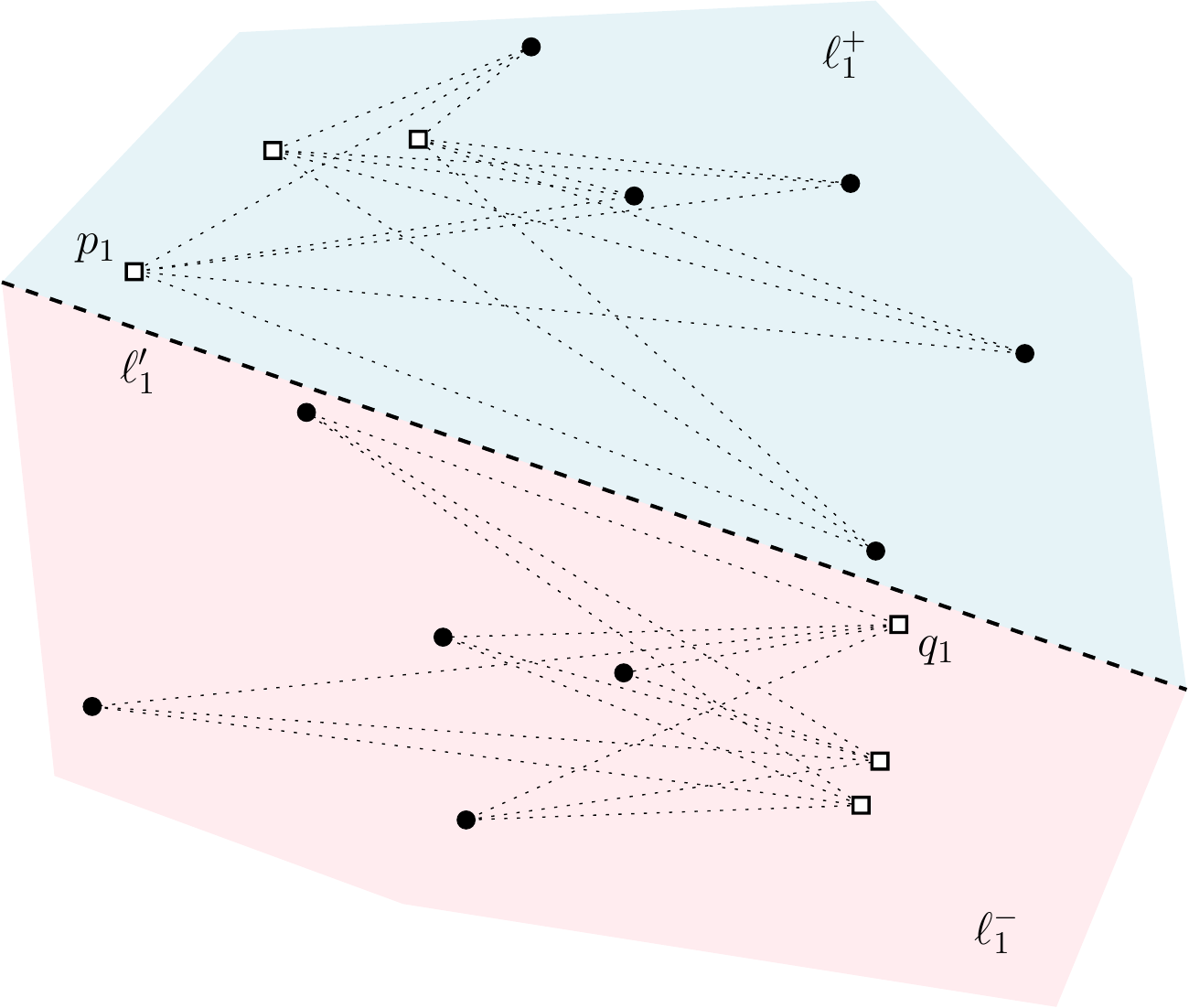}
		\caption{The vertices of $X_1$ are represented by squares. The dotted edges in the blue (red) region represent the complete bipartite subgraph $B_1^+$ ($B_1^-$) of $G_1$ corresponding to the line $\ell_1$ with the defining vertices $p_1$ and $q_1$.}
			\label{fig:k-quasi-bipartite}
	\end{figure}
	
	We first validate that each $G_i$ is $k$-\quasiplanar.
	For notational convenience we prove it only for $G_1$ in the following, though the same argument works for any $G_i$.
	The two bipartite subgraphs $B_1^+$ and $B_1^-$ are disjoint, as they lie to either side of the line $\ell'_1$.
	Thus, any crossing family contains only edges from at most one of them, and potentially further edges from the complete subgraph $K(X_1)$.
	Therefore every edge from the crossing family must be incident to one of the vertices that are both in $X_1$ and on the respective side of $\ell_1$.
	By construction, there can be at most $k-1$ such vertices and therefore, any crossing family has at most size $k-1$ as well.  
	
	It remains to show that any edge of $K(P)$ is covered by some $G_i$. Let $e := \{u, v\}$ be an edge of $K(P)$.
	For every $i = 1,\dots, n$, $p_i$ and $q_i$ are part of $X_{\lceil i/(k-1) \rceil}$.
	Thus, the endpoint $u$ is contained in $X_r$ for some $r$, and $v$ is contained in $X_s$, for some $s$.
	If $r = s$ we are done, as then $e$ is contained in $K(X_r)$ and thus part of $G_r$.
	So suppose $r \neq s$.
	Then $v$ and $u$ lie on the same side of either $\ell'_r$ or $\ell'_s$.
	In the former case, $e$ would be contained in either of the two complete bipartite subgraphs of $G_r$, that is, $B_r^+$ or $B_r^-$. In the latter, $e$ is contained in~$G_s$.
	\end{proof}

\thmMainQuasiplanarCF*

\begin{proof}
Let $P' \subseteq P$ be the subset of endpoints induced by a largest crossing family of size $m$.

Then, the lower bound follows immediately from \Cref{lem:halvingquasi} applied on $P'$.

For the upper bound, divide the point set $P \setminus P'$ into disjoint subsets $Q_1, \dots, Q_c$ of size $k-1$, where $c = \lceil \frac{n-2m}{k-1} \rceil$. Each edge with an endpoint in some $Q_i$ assign the color $i$ (for edges that have two choices, pick one arbitrarily). Certainly, each color class is $k$-\quasiplanar, since it consists of (at most) the union of $k-1$ stars. Together with $K(P')$, which we can clearly partition by using $\lceil \frac{m}{k-1} \rceil$ colors, the upper bound follows.
\end{proof}

\section{Conclusion}

In this paper, we initiated the study of partitions into beyond planar subgraphs. We studied $k$-planar partitions for the case of convex position and showed a lower bound of $\frac{n-1}{4.93\sqrt{k}}$ and an upper bound of $\frac{n}{\sqrt{2k}}$. Moreover, for 1-planar partitions we proved the tight bound $\lceil \frac{n}{3} \rceil$.

The $k$-\quasiplanar setting turned out to be more accessible, where we showed that any complete geometric graph can be partitioned into 3-\quasiplanar spanning trees. The corresponding question concerning $k$-planar partitions remains open:

\begin{question}
Does there exist a constant $k$ such that any complete geometric graph $K(P)$ on $2n$ vertices can be partitioned into $n$ $k$-planar spanning trees?
\end{question}

Moreover, we showed upper and lower bounds for $k$-\quasiplanar partitions for general $k$ (depending on the size of a largest crossing family).



\bibliography{../../partition_literature}

\appendix

\end{document}